\definecolor{webgreen}{rgb}{0,.5,0}
\definecolor{webbrown}{rgb}{.5,0,0}
\newtheorem{Thm}{Theorem}[section]
\newtheorem{Def}[Thm]{Definition}
\newtheorem{Lm}[Thm]{Lemma}
\newtheorem{Prop}[Thm]{Proposition}
\newtheorem{Cor}[Thm]{Corollary}
\theoremstyle{definition}
\newtheorem*{ack}{Acknowledgements}
\theoremstyle{remark}
\newtheorem{Rem}[Thm]{Remark}
\newtheorem{Exp}[Thm]{Example}
\numberwithin{equation}{section}
\def\<{\langle}
\def\>{\rangle}
\DeclareMathOperator{\gr}{gr}
\DeclareMathOperator{\id}{id}
\DeclareMathOperator{\ad}{ad}
\def\Q{\mathcal{Q}}
\def\Se{\mathcal{S}}
\def\T{\mathcal{T}}
\def\e{\varepsilon}
\def\c{\mathfrak{c}}
\begin{document}
\title[]
{Relative PBW type theorems for symmetrically braided Hopf algebras}
\author[]{Bogdan Ion}
\thanks{Department of Mathematics, University of Pittsburgh, Pittsburgh, PA15260}
\thanks{E-mail address: \nolinkurl{bion@pitt.edu}}
\date{April 24, 2010}%
\maketitle

\section*{Introduction} 

The fundamental result that opens the way to the representation theory of enveloping algebras of Lie algebras is the classical Poincar\'e-Birkhoff-Witt theorem. To what extent similar facts are true for general Hopf algebras it is not yet understood. For noncommutative cosemisimple Hopf algebras it is perhaps unreasonable to expect such a result as group algebras fall within this class of Hopf algebras. A more realistic question is whether a Hopf algebra has a PBW basis (perhaps in a generalized sense) as a module over its coradical in case this is a subalgebra, or else as a module over a Hopf subalgebra containing the coradical. Keeping in mind that the degree filtration of a enveloping algebra of a Lie algebra is nothing else than the coradical filtration we consider the following context.

Let $H$ be a Hopf algebra and let $K$ be a Hopf subalgebra containing the coradical $H_0$ of $H$. The Hopf subalgebra $K$ induces a Hopf algebra filtration on $H$ given by $F_nH=\wedge^{n+1}K$; if $K=H_0$ this is the coradical filtration of $H$. We denote by $\gr_K(H)$ the graded Hopf algebra associated to this filtration. Denote by $K^+$ the augmentation ideal of $K$ and let $J:=H/HK^+$; this is a quotient left $H$-module coalgebra of $H$ which is pointed irreducible as a coalgebra. The filtration on $J$ induced by the one on $H$ is the precisely the coradical filtration of $J$; its associated graded coalgebra is denoted by $\gr J$. Then, by \cite[Proposition 5.1]{masuoka} there is a unit and counit preserving left $K$-linear right $J$-colinear isomorphism between $H$ and $K\otimes J$ which preserves the filtrations on both sides and, of course it induces an appropriate isomorphism between $\gr_K(H)$ and $K\otimes \gr J$. Fortunately, $\gr J$ is not only a coalgebra but it has in fact a braided graded Hopf algebra structure. It is therefore reasonable to say that $H$ is of PBW type as a $K$-module if $\gr J$ is isomorphic as a braided graded algebra to the graded symmetric algebra of a braided graded vector space. Implicitly, if $H$ is of PBW type as a $K$-module then $J$ is linearly isomorphic to a braided symmetric algebra. The issue of $H$ having a PBW type basis as a $K$-module depends entirely on whether braided symmetric algebras for the appropriate braiding admit a basis consisting of monomials. If $\gr J$ is isomorphic to the braided symmetric algebra $\Se(V)$ associated to the braided vector space $V$ and $\Se(V)$ has a basis consisting of monomials with increasing letters in a well-ordered linear basis of $V$ then we say that $H$ has a PBW type basis as a $K$-module.

From this discussion, it appears that the most relevant context for such considerations is that of braided Hopf algebras. Hopf algebras in braided tensor categories, more specifically the irreducible ones,  are also of interest, first due to their role in the Andruskiewitsch-Schneider classification scheme  of pointed Hopf algebras (see e.g. \cite{andrus}), and second due to the development of supersymmetry and related considerations in the context of symmetrically braided categories. When $H$ is a braided Hopf algebra (not necessarily categorical) and $K$ a categorical braided Hopf subalgebra containing the coradical $H_0$, the same constructions are legitimate, Masuoka's result holds in this generality, and $\gr J$ is again a braided Hopf algebra but for a different braiding; all the concepts can be appropriately generalized to this context. We recall these structural results in some detail in Section \ref{appl}. 
\begin{Def} Let $H\supseteq K$ be a braided Hopf algebra and, respectively, a categorical braided Hopf subalgebra containing the coradical of $H$. Denote by $J$ the braided quotient coalgebra $H/HK^+$, and by $\gr J$ its associated braided graded coalgebra with respect to the coradical filtration. We say that $H$ is of PBW type as a $K$-module if the associated braided graded algebra $\gr J$ is isomorphic as a braided graded algebra to a braided graded symmetric algebra.
\end{Def}
It turns out that whether such an $H$ acquires a PBW type basis as a $K$-module depends entirely on the nature of the induced braiding on vector space $\gr_+ J/(\gr_+ J)^2$, where $\gr_+ J$ denotes the maximal homogeneous ideal of graded algebra $\gr J$.

If $H$ is irreducible and $K=H_0=k$, the field of definition, then being of PBW type as a $K$-module means simply that $\gr H$ is isomorphic to a braided symmetric algebra. The classical PBW theorem confirms that enveloping algebras of Lie algebras are of PBW type as $k$-modules. By a result of Kharchenko \cite[\S 7]{khar}, in characteristic zero enveloping algebras of symmetrically braided Lie algebras (which are irreducible) are of PBW type as $k$-modules. We show that in characteristic zero all irreducible symmetrically braided Hopf algebras are of PBW type as $k$-modules. The result is essentially an outcome of the work of  Masuoka \cite{masuoka}. By Theorem \ref{almostcentral}, in characteristic zero, if $H$ is symmetrically braided then $H$ is of PBW type as a $K$-module if either $K\hookrightarrow \gr_K(H)$ is central, or $\gr_K(H)\twoheadrightarrow K$ is cocentral. In particular, in characteristic zero, if the braiding on $H$ is trivial (i.e. $H$ is a usual Hopf algebra) and either then $K\hookrightarrow \gr_K(H)$ is central, or $\gr_K(H)\twoheadrightarrow K$ is cocentral, then $H$ has a PBW basis as a $K$-module. For PBW type theorems for a class of pointed Hopf algebras, which from the point of view outlined above reduce to statements about irreducible braided Hopf algebras with diagonal braiding, see \cite{khar1}.
\begin{ack} I am grateful to the referee for suggestions that had considerably augmented the generality of  Theorem \ref{almostcentral}. This work was supported in part by CNCSIS grant nr. 24/28.09.07 (Groups, quantum groups, corings, and representation theory). 
\end{ack}
\section{Braided bialgebras}
\subsection{} Throughout the paper $k$ will denote a field of characteristic zero. Nevertheless all considerations on this paper hold in arbitrary characteristic, the restriction arising from the hypothesis of Masuoka's result, Proposition \ref{masprop}. Therefore, Theorem \ref{thm1}, Theorem \ref{almostcentral}, and Corollary \ref{almostcentral2} also require fields of characteristic zero.

Unless otherwise noted all objects are $k$-vector spaces, all maps are $k$-linear, and all tensor products are over $k$. In principle the relevant categorical framework for the results in this paper is that of symmetric braided tensor categories which arise naturally in a number of contexts. However, everything could and will be done in a somewhat more general context (technically, but not conceptually) in which the existence and functorial properties of the braiding are required strictly only for the objects involved in the statements. We refer to \cite{baez,take} for all the basic definitions and properties below.

As general notation, for $V$ a vector space we denote by $V^{\otimes n}$ its $n$-fold tensor product. If $\c: V^{\otimes 2} \to V^{\otimes 2}$ is a map then $\c_i: V^{\otimes n}\to V^{\otimes n}$ denotes the map $\id_{V^{\otimes i-1}}\otimes \c \otimes \id_{V^{\otimes n-i-1}}$. We will not specify $n$ if it is unambiguous from the context.

\subsection{} A braiding of the vector space $V$ is a map $\c: V^{\otimes 2} \to V^{\otimes 2}$ satisfying the braid equation 
\begin{equation}
\c_1\c_2\c_1= \c_2\c_1\c_2
\end{equation} 
 A  braided vector space is a pair $(V,\c)$, with $\c$ a braiding of $V$. The braiding is said to be symmetric if $\c^2=\id_{V^{\otimes 2}}$; equivalently, we say that $(V,\c)$ is symmetrically braided. We say the braiding $\c$ is trivial if $\c(v\otimes w)=w\otimes v$ for all elements $v,w$ of $V$. A morphism of braided vector spaces $(V,\c_V)$ and $(W , \c_W)$ is a $k$-linear map $f: V\to W$ such that $\c_W(f\otimes f) = (f \otimes f )\c_V$.
 
Let $X$ be a linear subspace of $V$. We say that $X$ is compatible with the braiding $\c$ (or that it is categorical) if $\c(X\otimes V)\subseteq V\otimes X$ and $\c(V\otimes X)\subseteq X\otimes V$. Clearly if two subspaces $X,Y$ of $V$ are compatible with the braiding, then $\c(X\otimes Y)\subseteq Y\otimes X$ and $\c(Y\otimes X)\subseteq X\otimes Y$. We say that a collection of subspaces of $V$  is compatible with the braiding (or categorical) if all its elements are compatible with the braiding.

\begin{Rem} Any reference to the braided space structure on $k$ will make reference to the trivial braiding on $k$.
\end{Rem}

\begin{Rem} If $(V, \c)$ is braided vector space, then $(V^{\otimes 2}, \c_2\c_1\c_3\c_2)$ is also a braided vector space. Unless otherwise noted, for a braided vector space $V$, any reference to the braided structure on $V^{\otimes 2}$ will refer to the structure constructed in this fashion.
\end{Rem}
\begin{Rem} If $(V, \c)$ is braided vector space, the maps $\c_{n,m}:V^{\otimes n}\otimes V^{\otimes m}\to V^{\otimes m}\otimes V^{\otimes n}$  $$\c_{n,m}=(\c_m\dots\c_{n+m-1})(\c_{m-1}\dots\c_{n+m-2})\dots(\c_1\dots\c_n)$$ induce a braiding $\oplus_{n,m}\c_{n,m}$ on $\T(V)$, the tensor algebra of $V$.
\end{Rem}

\begin{Def} A braided algebra is a quadruple $(A,\nabla,1, \c)$ where $(A , \c)$ is a braided vector space, $(A,\nabla, 1)$ is an associative unital algebra, and 
\begin{subequations}
\begin{align}\label{alg1}
\c(\nabla \otimes \id_A ) &= (\id_A\otimes \nabla)\c_1\c_2\\ \label{alg2}
\c( \id_A \otimes \nabla) &= (\nabla \otimes \id_A )\c_2\c_1\\
\c(1 \otimes a) &= a \otimes 1, \c(a \otimes 1) = 1 \otimes a \text{ for all } a \in A 
\end{align}
\end{subequations}
A braided algebra is said to be $\c$-commutative if $\nabla\c=\nabla$.

A morphism of braided algebras is a map which is both a morphism of associative unital algebras and a morphism of braided vector spaces. 
\end{Def}

Note that if $A$ is a braided algebra then the multiplication and the unit are  morphisms  of braided vector spaces. It is convenient to denote $\nabla^{op}=\nabla\c$. It is easy to check that $(A,\nabla^{op},1,\c)$ is also a braided algebra. We use $A^{op}$ to refer to this algebra structure. The algebra $A$ is $\c$-commutative if and only if the two structures coincide.

If $X, Y$ are categorical subspaces of $A$ then \eqref{alg1} and \eqref{alg2} assure that $\nabla(X\otimes Y)$ is also a categorical subspace.

\begin{Def} Let  $(A,\nabla,1,\c)$ be a braided algebra. The map
\begin{equation}
[\cdot,\cdot]_A: A^{\otimes 2}\to A, \quad [\cdot, \cdot]_A=\nabla(\id_{A^{\otimes 2}}-\c)
\end{equation}
is called the braided commutator on $A$. We suppress the subscript from the notation if it is unambiguous from the context.
\end{Def}

It is straightforward to verify that $A$ is $\c$-commutative if and only if the commutator is zero map.

\begin{Def} A braided coalgebra is a quadruple $(C,\Delta,\e, \c)$ where $(C,\c)$ is a braided vector space, $(C ,\Delta,\e)$ is a coassociative counital coalgebra, and 
\begin{subequations}
\begin{align}
(\Delta\otimes \id_C )\c &= \c_2\c_1(\id_C \otimes\Delta)\\ 
(\id_C \otimes\Delta)\c &= \c_1\c_2(\Delta\otimes \id_C)\\ 
(\e \otimes \id_C)\c(c \otimes d) &= \e(d)c = (\id_C \otimes \e)c(d \otimes c ) \text{ for all } c , d \in C 
\end{align}
\end{subequations}
A braided coalgebra is said to be $\c$-cocommutative if $\Delta=\c\Delta$.

A morphism of braided coalgebras is a map which is a morphism of coassociative counital coalgebras and a morphism of braided vector spaces.
\end{Def}

Note that if $C$ is a braided coalgebra then the comultiplication and the counit are morphisms of braided vector spaces. As a consequence $\ker\e$ is a categorical subspace of $C$.

\begin{Rem} If $(A,\nabla,1, \c)$ is braided algebra, then $(A^{\otimes 2}, (\nabla\otimes\nabla)\c_2,1\otimes 1, \c_2\c_1\c_3\c_2)$ is also a braided algebra. Similarly, if $(C,\Delta,\e, \c)$ where $(C,\c)$ is a braided coalgebra, then $(C^{\otimes 2}, \c_2(\Delta\otimes\Delta),\e\otimes \e, \c_2\c_1\c_3\c_2)$. Any reference to the braided algebra structure on $A^{\otimes 2}$ or to the braided coalgebra structure on $C^{\otimes 2}$ will refer to these structures.
\end{Rem}

\begin{Def} A braided bialgebra is a 6-tuple $(H,\nabla, 1, \Delta,\e,\c)$ where $(H ,\nabla,1, \c)$ is a braided algebra, $(H,\Delta, \e,\c)$ is a braided coalgebra, and $\Delta$ and $\e$
are algebra morphisms. A braided Hopf algebra is a braided bialgebra which has a convolution inverse $S: H\to H$ to the identity map.
\end{Def}

It is important to note that for a braided Hopf algebra $H$ the following identities are necessarily true
\begin{subequations}
\begin{align}
(S\otimes \id_H )\c &= \c(\id_H \otimes S),& &(\id_H \otimes S)\c=\c(S\otimes \id_H )\\ 
\nabla\c(S\otimes S) &=S\nabla,& &(S\otimes S)\c\Delta=\Delta S
\end{align}
\end{subequations}

The augmentation ideal $\ker\e$ of a braided bialgebra $H$ is a categorical subspace and so are all its powers. In particular, for any braided bialgebra $H$ the vector space \begin{equation}\Q(H):=\ker\e/(\ker\e)^2\end{equation} inherits a canonical braided vector space structure.

\begin{Lm}\label{lemma1} Let $H$ be a braided bialgebra. Then
\begin{equation}
\Delta[\cdot,\cdot]_H=[\cdot,\cdot]_{H^{\otimes 2}}(\Delta\otimes\Delta)
\end{equation}
\end{Lm}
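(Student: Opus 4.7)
The plan is to expand both sides in terms of $\nabla$ and $\c$, reduce the problem to the single intertwining relation
\[
(\Delta\otimes\Delta)\c = \c_2\c_1\c_3\c_2(\Delta\otimes\Delta),
\]
and deduce this intertwining from the two braided coalgebra axioms. Writing the left hand side as $\Delta\nabla-\Delta\nabla\c$ and, using the braided algebra structure on $H^{\otimes 2}$ recalled earlier, the right hand side as $(\nabla\otimes\nabla)\c_2(\id-\c_2\c_1\c_3\c_2)(\Delta\otimes\Delta)$, the braided bialgebra axiom $\Delta\nabla=(\nabla\otimes\nabla)\c_2(\Delta\otimes\Delta)$ matches the first summands. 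Precomposing the displayed intertwining with $(\nabla\otimes\nabla)\c_2$ then matches the remaining summands and closes the proof.

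To prove the intertwining, I factor $\Delta\otimes\Delta=(\Delta\otimes\id_{H^{\otimes 2}})(\id_H\otimes\Delta)$ and apply the coalgebra axiom $(\id\otimes\Delta)\c=\c_1\c_2(\Delta\otimes\id)$ to obtain
\[
(\Delta\otimes\Delta)\c = (\Delta\otimes\id_{H^{\otimes 2}})\c_1\c_2(\Delta\otimes\id_H).
\]
The remaining task is to commute $\Delta\otimes\id_{H^{\otimes 2}}$ through $\c_1\c_2$. I invoke the other axiom $(\Delta\otimes\id)\c=\c_2\c_1(\id\otimes\Delta)$ twice: first in its form acting on the first three tensorands of $H^{\otimes 4}$, which moves $\Delta$ past $\c_1$, turns $\Delta\otimes\id_{H^{\otimes 2}}$ into $\id_H\otimes\Delta\otimes\id_H$, and produces a factor $\c_2\c_1$; and then in its form shifted to the last three tensorands, which moves $\Delta$ past $\c_2$, turns $\id_H\otimes\Delta\otimes\id_H$ into $\id_{H^{\otimes 2}}\otimes\Delta$, and produces $\c_3\c_2$. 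Finally, recognizing $(\id_{H^{\otimes 2}}\otimes\Delta)(\Delta\otimes\id_H)=\Delta\otimes\Delta$ collapses the right hand side to $\c_2\c_1\c_3\c_2(\Delta\otimes\Delta)$, as required.

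The main obstacle is the index bookkeeping in this middle computation: one must carefully distinguish which $\c_i$ acts in which tensor power and track how the two coalgebra axioms lift to statements about maps between $H^{\otimes 3}$ and $H^{\otimes 4}$ when $\Delta$ is inserted in a middle position. With the intertwining in hand, the lemma follows by a one-line substitution.
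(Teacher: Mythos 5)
Your proof is correct and follows essentially the same route as the paper: expand both commutators, match the first summands via $\Delta\nabla=(\nabla\otimes\nabla)\c_2(\Delta\otimes\Delta)$, and match the second via the intertwining $(\Delta\otimes\Delta)\c=\c_2\c_1\c_3\c_2(\Delta\otimes\Delta)$. The only difference is that you verify the intertwining explicitly from the two braided coalgebra axioms (your index bookkeeping checks out), whereas the paper simply quotes it as the statement that $\Delta$ is a morphism of braided vector spaces.
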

\begin{proof} Since $\Delta$ is a braided vector space morphism we have \begin{equation}(\Delta\otimes\Delta)\c=\c_2\c_1\c_3\c_2(\Delta\otimes\Delta)\end{equation}
Also, $\Delta$ is a braided algebra morphism hence
\begin{equation}
\Delta\nabla=(\nabla\otimes\nabla)\c_2(\Delta\otimes\Delta)
\end{equation}
Taking both equalities into account we obtain that 
\begin{equation}
\Delta\nabla(\id_{H^{\otimes 2}}-\c)=(\nabla\otimes\nabla)\c_2(\id_{H^{\otimes 4}}-\c_2\c_1\c_3\c_2)(\Delta\otimes\Delta)
\end{equation}
which is precisely our claim.
\end{proof}

\subsection{} An algebra filtration, respectively grading, is said to be a braided algebra filtration, respectively grading, if it is compatible with the braiding. Similarly for coalgebras, bialgebras. 
For $C$ a coalgebra we denote by $\{C_n\}_{n\geq 0}$ the coradical filtration. It is convenient to denote $C_{-1}=0$. Let 
\begin{equation}
\gr C=\oplus_{n\geq 0}(\gr C)(n)=\oplus_{n\geq 0} C_n/C_{n-1}
\end{equation}
be the associated graded coalgebra. If $C$ is a braided coalgebra with categorical coradical the coradical filtration is compatible with the braiding and $\gr C$ thus inherits a braided graded coalgebra structure. Furthermore, the coradical filtration of $\gr C$ is the filtration given by degree and $\gr(\gr C)=\gr C$. 

We say that the (braided) bialgebra $H$ is coradically graded if there exists a (categorical) bialgebra grading of $H$ such that the associated filtration is precisely the coradical filtration. If $H$ is a braided bialgebra and the coradical is a sub-bialgebra then the coradical filtration is a bialgebra filtration and $\gr H$ is a braided coradically graded bialgebra. The braided bialgebra $H$ is said to be irreducible if $H_0=k$.

\subsection{} Let $(V,\c)$ be a braided vector space. The tensor algebra $\T (V)$ with the standard grading and the braiding induced by $\c$ is braided graded algebra.  The quotient of $\T(V)$ by the ideal generated by $(\id_{V^{\otimes 2}}-\c)(V^{\otimes 2})$ is a braided graded $\c$-commutative algebra,  denoted by $\Se(V)$, which will be referred to as the braided symmetric algebra of $V$.

\subsection{} We now come to our main observation.
\begin{Lm}\label{lemma2} Let $H$ be a braided algebra. Then, 
\begin{equation}
[\cdot, \cdot]_{H^{\otimes 2}}=([\cdot,\cdot]_H\otimes\nabla+ \nabla^{op}\otimes[\cdot,\cdot]_H)\c_2+
(\nabla\otimes\nabla)(\id_{H^{\otimes 4}}-\c_2^2)\c_1\c_3\c_2
\end{equation}
\end{Lm}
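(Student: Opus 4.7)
The plan is to expand both sides in the $\c_i$ calculus and check that the two operators on $H^{\otimes 4}$ agree. Everything is a telescoping cancellation once the definitions are unwound, so no serious obstacle is involved—the challenge is bookkeeping the indices of the $\c_i$'s on $H^{\otimes 4}$.

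First I would rewrite the left hand side. By the definition of the commutator applied to the braided algebra structure $(H^{\otimes 2},(\nabla\otimes\nabla)\c_2,1\otimes 1,\c_2\c_1\c_3\c_2)$ on $H^{\otimes 2}$,
\begin{equation}
[\cdot,\cdot]_{H^{\otimes 2}}=(\nabla\otimes\nabla)\c_2(\id_{H^{\otimes 4}}-\c_2\c_1\c_3\c_2)=(\nabla\otimes\nabla)\c_2-(\nabla\otimes\nabla)\c_2^2\c_1\c_3\c_2.
\end{equation}

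Next I would unwind the right hand side. Using $[\cdot,\cdot]_H=\nabla(\id-\c)$ and $\nabla^{op}=\nabla\c$, and the fact that $f\otimes g=f_1f_2\otimes g_1g_2$ factors as $(f_1\otimes g_1)(f_2\otimes g_2)$, we get
\begin{equation}
[\cdot,\cdot]_H\otimes\nabla=(\nabla\otimes\nabla)(\id_{H^{\otimes 4}}-\c_1),\qquad \nabla^{op}\otimes[\cdot,\cdot]_H=(\nabla\otimes\nabla)(\c_1-\c_1\c_3),
\end{equation}
so their sum simplifies after a cancellation to $(\nabla\otimes\nabla)(\id_{H^{\otimes 4}}-\c_1\c_3)$. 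Composing with $\c_2$ on the right yields
\begin{equation}
([\cdot,\cdot]_H\otimes\nabla+\nabla^{op}\otimes[\cdot,\cdot]_H)\c_2=(\nabla\otimes\nabla)(\c_2-\c_1\c_3\c_2).
\end{equation}

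Finally I would add in the remaining term $(\nabla\otimes\nabla)(\id_{H^{\otimes 4}}-\c_2^2)\c_1\c_3\c_2=(\nabla\otimes\nabla)(\c_1\c_3\c_2-\c_2^2\c_1\c_3\c_2)$. The $\c_1\c_3\c_2$ contributions telescope and one is left with $(\nabla\otimes\nabla)\c_2-(\nabla\otimes\nabla)\c_2^2\c_1\c_3\c_2$, matching the expression obtained for $[\cdot,\cdot]_{H^{\otimes 2}}$. The main (minor) obstacle is keeping track of which $\c_i$ arises from tensoring versus from the twisted multiplication on $H^{\otimes 2}$; the commutation $\c_1\c_3=\c_3\c_1$ and the identification of $\c\otimes\c$ with $\c_1\c_3$ on $H^{\otimes 4}$ make the whole manipulation routine.
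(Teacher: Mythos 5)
Your computation is correct: the expansion of $[\cdot,\cdot]_{H^{\otimes 2}}$ as $(\nabla\otimes\nabla)\c_2-(\nabla\otimes\nabla)\c_2^2\c_1\c_3\c_2$ and the telescoping of the right-hand side both check out, and the identifications $[\cdot,\cdot]_H\otimes\nabla=(\nabla\otimes\nabla)(\id_{H^{\otimes 4}}-\c_1)$ and $\nabla^{op}\otimes[\cdot,\cdot]_H=(\nabla\otimes\nabla)(\c_1-\c_1\c_3)$ are exactly right. The paper dismisses this as a ``straightforward computation,'' and what you have written is precisely that computation carried out, so your approach coincides with the intended one.
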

\begin{proof}
Straightforward computation.
\end{proof}
\begin{Prop}
Let $H$ be a symmetrically braided bialgebra with categorical bialgebra coradical. Assume that \begin{equation}[H_0,H_n]\subseteq H_{n-1}\end{equation} for all $n\geq 0$. Then,
\begin{equation}[H_m,H_n]\subseteq H_{m+n-1}\end{equation}
for all $n,m\geq 0$.
\end{Prop}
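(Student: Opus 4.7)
The plan is to prove the containment by induction on $N:=m+n$, relying on the wedge characterization of the coradical filtration. Because $H_0$ is a subbialgebra one has $H_{N-1}=H_0\wedge H_{N-2}=\Delta^{-1}(H_0\otimes H+H\otimes H_{N-2})$, so it suffices to show that for every $x\in H_m$ and $y\in H_n$ with $m+n=N$ the image $\Delta[x,y]$ lies in the subspace $M:=H_0\otimes H+H\otimes H_{N-2}$. The cases $N\le 1$ fall immediately out of the hypothesis: for $N=0$ it is the case $n=0$ of the assumption, and for $N=1$ the identity $[\cdot,\cdot]\c=\nabla(\c-\c^2)=-[\cdot,\cdot]$ forced by symmetry of $\c$, combined with $\c(H_1\otimes H_0)\subseteq H_0\otimes H_1$, yields $[H_1,H_0]=[H_0,H_1]\subseteq H_0$.

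For the inductive step I combine Lemmas \ref{lemma1} and \ref{lemma2}. The former rewrites $\Delta[x,y]$ as $[\cdot,\cdot]_{H^{\otimes 2}}(\Delta x\otimes\Delta y)$; the latter, once we invoke $\c_2^2=\id$ (a consequence of the symmetry $\c^2=\id$), collapses to $[\cdot,\cdot]_{H^{\otimes 2}}=([\cdot,\cdot]\otimes\nabla+\nabla^{op}\otimes[\cdot,\cdot])\c_2$. Using that $H_0$ is a subbialgebra, I decompose $\Delta x=\sum_{i+j=m}\Delta x|_{(i,j)}$ with $\Delta x|_{(i,j)}\in H_i\otimes H_j$ (and similarly for $\Delta y$). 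Categoricity of the filtration makes $\c_2$ send the piece $\Delta x|_{(i,m-i)}\otimes\Delta y|_{(k,n-k)}$ into $H_i\otimes H_k\otimes H_{m-i}\otimes H_{n-k}$, so the two summands of the simplified $[\cdot,\cdot]_{H^{\otimes 2}}$ contribute respectively to $[H_i,H_k]\otimes H_{m+n-i-k}$ and $H_{i+k}\otimes[H_{m-i},H_{n-k}]$.

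For any pair $(i,k)$ with $i+k<N$ the inductive hypothesis gives $[H_i,H_k]\subseteq H_{i+k-1}$, and a short case split on whether $i+k\le 1$ or $i+k\ge 2$ places the first contribution inside $M$; symmetrically, the second contribution lies in $M$ whenever $(m-i)+(n-k)<N$. The main obstacle, and the only place where care is required, is the two extremal pieces $(i,k)=(m,n)$ in the first summand and $(i,k)=(0,0)$ in the second: these respectively produce $[H_m,H_n]\otimes H_0$ and $H_0\otimes[H_m,H_n]$, reintroducing the very set I am trying to bound. The crucial observation that resolves this apparent circularity is that these terms automatically live in $H\otimes H_0\subseteq H\otimes H_{N-2}$ and in $H_0\otimes H$ respectively (using $H_0\subseteq H_{N-2}$ for $N\ge 2$), so both sit in $M$ without any knowledge of $[H_m,H_n]$. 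With all contributions thus placed in $M$, the wedge characterization yields $[x,y]\in H_{N-1}$ and the induction closes.
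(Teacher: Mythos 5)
Your proof is correct and follows essentially the same route as the paper's: both reduce the claim to showing $\Delta[x,y]\in H_0\otimes H+H\otimes H_{m+n-2}$ via the wedge characterization of the coradical filtration, and both combine Lemma \ref{lemma1} with Lemma \ref{lemma2} (whose last term is killed by $\c^2=\id$) to obtain the same sum of terms, disposed of by the same case split on $i+k$. The only cosmetic differences are that you run a single induction on $m+n$ where the paper uses a double induction on $m$ and then $n$, and that your explicit resolution of the ``extremal'' terms $(i,k)=(m,n)$ and $(i,k)=(0,0)$ is exactly the paper's treatment of the cases $s+i\geq 2$ and $s+i=0$.
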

\begin{proof} We show that for all $m\geq 0$
\begin{equation}\label{eq1}
[H_m,H_n]\subseteq H_{m+n-1}, \text{ for all } n\geq 0
\end{equation}
by induction on $m\geq 0$. For $m=0$ this is exactly our hypothesis. For $m>0$ we assume that our claim holds for all strictly smaller values and we prove \eqref{eq1} by induction on $n\geq 0$. Again, if $n=0$ the claim follows from the hypothesis and the fact that the commutator is $\c$-skew-symmetric if the braiding $\c$ is symmetric. Let now $n>0$ and assume that the claim holds for all strictly smaller values. The claim is equivalent to 
\begin{equation}\label{eq2}
\Delta([H_m,H_n])\subseteq H_0\otimes H+H\otimes H_{m+n-2}
\end{equation}
From Lemma \ref{lemma1}, Lemma \ref{lemma2} and the fact that the coradical filtration is a categorical we obtain 
\begin{equation}
\Delta([H_m,H_n])\subseteq \sum_{\stackrel{i+j=m}{\scriptscriptstyle s+t=n}} \left([H_i,H_s]\otimes H_{j+t}+H_{s+i}\otimes [H_j,H_t]\right)
\end{equation}
If $s+i\geq 2$ then both terms of the right hand side are subsets of $H\otimes H_{m+n-2}$. If $s+i=0$ then both terms are subsets of $H_0\otimes H$. Finally, if $s+i=1$ then, by the induction hypothesis, the first term is a subset of $H_0\otimes H$ and the second term is a subset of $H\otimes H_{m+n-2}$. In conclusion \eqref{eq2} is satisfied and our claim is proved.
\end{proof}
\begin{Cor} \label{cor}
Let $H$ be a symmetrically braided bialgebra with categorical bialgebra coradical satisfying $$[H_0,H_n]\subseteq H_{n-1}$$ for all $n\geq 0$. Then, the associated braided graded bialgebra $\gr H$ is $\c$-commutative. In particular, if $H$ is an irreducible symmetrically braided bialgebra then $\gr H$ is $\c$-commutative.
\end{Cor}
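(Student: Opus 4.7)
The plan is to reduce everything to the Proposition just proved, which gives $[H_m, H_n] \subseteq H_{m+n-1}$ for all $m,n\geq 0$; the corollary then falls out of the very definition of the associated graded structure.

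First I would record the compatibility of the braided bialgebra structure with the coradical filtration. Since the coradical is a categorical sub-bialgebra, the earlier discussion in the paper applies and the coradical filtration is a braided bialgebra filtration; hence both the multiplication $\nabla$ and the braiding $\c$ descend to $\gr H$ and endow it with a braided graded bialgebra structure. By construction, for any $x\in H_m$, $y\in H_n$ with images $\bar x\in(\gr H)(m)$, $\bar y\in(\gr H)(n)$, one has
\[
[\bar x,\bar y]_{\gr H} \;=\; [x,y]_H \bmod H_{m+n-1} \;\in\; H_{m+n}/H_{m+n-1} = (\gr H)(m+n).
\]

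Next I would apply the Proposition: $[x,y]_H\in H_{m+n-1}$, so its class in $H_{m+n}/H_{m+n-1}$ is zero. As $m,n$ and the lifts are arbitrary, the braided commutator on $\gr H$ vanishes on every bihomogeneous piece, and by bilinearity it is identically zero. This is exactly the definition of $\c$-commutativity.

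For the \emph{in particular} clause, I would verify that an irreducible $H$ automatically satisfies the hypothesis $[H_0,H_n]\subseteq H_{n-1}$. Since $H_0=k$, this reduces to $[1,h]_H=0$ for all $h\in H$, which is immediate from the braided algebra axioms $\c(1\otimes h)=h\otimes 1$ and $\nabla(1\otimes h)=h=\nabla(h\otimes 1)$. The conceptual work (the inductive filtration estimate on the commutator) has already been carried out in the Proposition, so there is essentially no obstacle here beyond the routine identification of the graded commutator with the class of the ungraded one.
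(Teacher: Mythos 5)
Your proof is correct and is exactly the intended deduction: the paper states the corollary without proof as an immediate consequence of the preceding Proposition, and your identification of the graded braided commutator with the class of $[x,y]_H$ in $H_{m+n}/H_{m+n-1}$, together with the check that irreducibility ($H_0=k$) forces $[H_0,H_n]=0$, is precisely the routine argument being left to the reader.
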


\subsection{}  Let us recall now two fundamental structural results valid in characteristic zero for symmetric braidings that are due to Kharchenko, and respectively Masuoka. The first result \cite[Theorem 6.1]{khar} is an equivalence between the category of irreducible, $\c$-cocommutative, braided Hopf algebras and the category of $\c$-Lie algebras. This extends the classical equivalence due to Kostant. The second result \cite[Theorem 6.7]{masuoka}, in some sense dual to the first one, is an equivalence between the category of irreducible, $\c$-commutative, braided Hopf algebras and the category of locally nilpotent $\c$-Lie coalgebras. Classically this corresponds to the equivalence between categories of unipotent affine algebraic groups and the category of finite dimensional nilpotent Lie algebras. We will only need here the following partial result  \cite[Proposition 6.8]{masuoka}.
\begin{Prop}\label{masprop}
Let $H$ be an irreducible, $\c$-commutative, symmetrically braided bialgebra. Then, $H$ and $\Se(\Q(H))$ are canonically isomorphic as braided algebras. 
\end{Prop}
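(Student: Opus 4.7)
The plan is to build a canonical braided algebra isomorphism $\phi\colon \Se(\Q(H))\to H$ via the universal property of the braided symmetric algebra, after identifying $\Q(H)$ with the space of primitives $P(H)$.

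First, I would show that the natural braided linear map $\rho\colon P(H)\to \Q(H)=\ker\e/(\ker\e)^2$ is an isomorphism. Since $H$ is irreducible, $H_0=k$ and $H_1=k\oplus P(H)$, so $\rho$ is simply the restriction of the projection $\ker\e\to \Q(H)$ to $P(H)$. For surjectivity, I would argue by induction on coradical degree: given $x\in H_n\cap\ker\e$ with $n\geq 2$, the element $\bar\Delta(x)=\Delta(x)-x\otimes 1-1\otimes x$ lies in a categorical subspace of $(\ker\e)^{\otimes 2}$ with components of strictly smaller coradical degree, and the combination of $\c$-commutativity with the inductive hypothesis allows one to rewrite $x\pmod{(\ker\e)^2}$ as a primitive. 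Injectivity, namely $P(H)\cap(\ker\e)^2=0$, is the delicate point; the symmetric braiding and characteristic zero are essential here.

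Composing $\rho^{-1}$ with the inclusion $P(H)\hookrightarrow H$ and invoking the universal property of $\Se(\Q(H))$ as the free $\c$-commutative braided algebra on $\Q(H)$, we obtain a canonical morphism of braided algebras $\phi\colon \Se(\Q(H))\to H$. To show $\phi$ is bijective, equip both sides with their coradical filtrations (which on $\Se(\Q(H))$ coincides with the grading filtration, since $\Q(H)$ is primitive there); $\phi$ is filtration-preserving by construction, and the associated graded map $\gr\phi$ is a morphism of $\c$-commutative graded braided bialgebras restricting to an isomorphism in degree $1$. Surjectivity of $\gr\phi$ follows because in the irreducible case $\gr H$ is generated in degree $1$ by its primitives, an induction on coradical degree again using $\bar\Delta$. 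For injectivity, one builds a left inverse using the iterated comultiplication $\Delta^{(n-1)}\colon H\to H^{\otimes n}$ projected onto $\Q(H)^{\otimes n}$, combined with a characteristic-zero symmetrization (averaging over the symmetric group action on tensor powers afforded by the symmetric braiding) that provides a canonical section $\Se^n(\Q(H))\hookrightarrow \Q(H)^{\otimes n}$.

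The main obstacle is the injectivity step: it amounts to showing that $\Se(\Q(H))$ carries no relations beyond those imposed by $\c$-commutativity. This is precisely where both the symmetry of the braiding (to produce a genuine symmetric group action on $V^{\otimes n}$) and characteristic zero (to invert $n!$ when constructing the symmetrization projector and its section) are indispensable. The argument is essentially dual to Kharchenko's PBW-type result for $\c$-cocommutative symmetrically braided Hopf algebras cited earlier in the introduction, and the canonicity of $\phi$ is built in since it is determined by the inclusion $P(H)\hookrightarrow H$.
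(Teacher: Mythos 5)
First, note that the paper does not prove this statement at all: it is quoted verbatim from Masuoka (\cite[Proposition 6.8]{masuoka}), so any comparison is really with Masuoka's argument rather than with an argument in this paper.

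Your proposal has a fatal gap at its very first step: the map $\rho\colon P(H)\to \Q(H)$ is \emph{not} surjective for $\c$-commutative $H$, so $\rho^{-1}$ does not exist and the morphism $\phi$ you build factors through the (generally proper) subalgebra of $H$ generated by the primitives. The argument you sketch is the correct one for the \emph{dual} situation ($\c$-cocommutative $H$, i.e.\ Kostant/Kharchenko), where $\gr H$ really is generated by $P(H)$; for $\c$-commutative $H$ the obstruction to lifting a class in $\Q(H)$ to a primitive is exactly the (generally nonzero) Lie coalgebra structure on $\Q(H)$, and $\c$-commutativity gives no control over it. A trivially braided counterexample already in characteristic zero: $H=\mathcal{O}(G)=k[x,y,z]$ for $G$ the Heisenberg group, with $x,y$ primitive and $\Delta z=z\otimes 1+1\otimes z+x\otimes y$. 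This $H$ is irreducible, commutative, and indeed isomorphic as an algebra to $\Se(\Q(H))=k[x,y,z]$, but $\bar\Delta(z)=x\otimes y$ cannot be cancelled by $\bar\Delta$ of any element of $(\ker\e)^2$ (whose leading terms are $\c$-symmetric), so $P(H)=kx\oplus ky$ has dimension $2$ while $\dim\Q(H)=3$; your $\phi$ would land in $k[x,y]$. The same example refutes the later claim that $\gr H$ is generated in degree one. A correct canonical construction must instead produce a section of $\ker\e\twoheadrightarrow\Q(H)$ by other means -- in characteristic zero the standard device is the first Eulerian idempotent $\sum_{n\geq 1}\frac{(-1)^{n-1}}{n}(\id_H-1\e)^{*n}$, whose image is a canonical complement of $(\ker\e)^2$ in $\ker\e$ -- and then prove bijectivity of the induced map $\Se(\Q(H))\to H$ by a filtration argument; your symmetrization ideas would be useful for that second stage, but the identification of $\Q(H)$ with $P(H)$ on which everything rests is false.
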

Note that if $H$ is a braided graded bialgebra then $\Q(H)$ has a braided vector space structure and this induces a second grading on the braided symmetric algebra; the above isomorphism is a graded morphism for this second graded structure.

We are now ready to prove our main result. See \cite[Theorem 2]{lebruyn} for the trivially braided version.
\begin{Thm}\label{thm1} Let $H$ be an irreducible symmetrically braided bialgebra. Then $H$ is of PBW type as a $k$-module.
\end{Thm}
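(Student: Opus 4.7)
The plan is to apply Proposition~\ref{masprop} directly to $\gr H$ itself. First, I would unpack what ``PBW type as a $k$-module'' means in this irreducible setting: since $K=H_0=k$, we have $K^+=0$, so $J=H/HK^+=H$, and the coradical filtration on $J$ coincides with that on $H$. Thus I need to produce an isomorphism of braided graded algebras between $\gr H$ and the braided symmetric algebra of some braided graded vector space.

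The central observation is that the hypothesis of Corollary~\ref{cor} is automatic for an irreducible braided bialgebra. Indeed, the braiding restricts to the flip on $k\otimes H$, so for any $\lambda\in k$ and $h\in H_n$ one has $\c(\lambda\otimes h)=h\otimes\lambda$, giving $[\lambda,h]=\lambda h-\lambda h=0$. Hence $[H_0,H_n]=0\subseteq H_{n-1}$ trivially, and Corollary~\ref{cor} applies to yield that $\gr H$ is $\c$-commutative. I would also note that, as already observed in the discussion of graded coalgebras, the coradical filtration of $H$ is categorical (automatic here since $H_0=k$ is clearly a categorical subspace), so $\gr H$ inherits a symmetric braiding from $H$, and $(\gr H)_0=H_0=k$, so $\gr H$ is still irreducible.

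Now $\gr H$ is an irreducible, $\c$-commutative, symmetrically braided bialgebra, so Proposition~\ref{masprop} provides a canonical isomorphism of braided algebras
\begin{equation}
\gr H \;\cong\; \Se(\Q(\gr H)).
\end{equation}
Because $\gr H$ is a braided graded bialgebra, the remark following Proposition~\ref{masprop} ensures that $\Q(\gr H)=\ker\e/(\ker\e)^2$ inherits a grading making it a braided graded vector space, and that the above isomorphism is graded with respect to the induced second grading on $\Se(\Q(\gr H))$. This exhibits $\gr H$ as a braided graded symmetric algebra, which is precisely the condition defining PBW type as a $k$-module.

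The only potentially delicate point is confirming that the isomorphism furnished by Proposition~\ref{masprop} respects the grading (not merely the filtration), but this is precisely what the remark after that proposition asserts for graded input, so nothing beyond bookkeeping is required. The proof is therefore essentially a packaging of Corollary~\ref{cor} and Proposition~\ref{masprop} once one observes that irreducibility forces the bracket hypothesis of Corollary~\ref{cor}.
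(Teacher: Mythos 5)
Your proof is correct and follows essentially the same route as the paper: invoke Corollary~\ref{cor} (whose ``in particular'' clause already covers the irreducible case, for exactly the reason you spell out, namely that the braiding acts as the flip on $k\otimes H$ so the bracket with $H_0=k$ vanishes) to get that $\gr H$ is $\c$-commutative, then apply Proposition~\ref{masprop} together with the remark on gradings to identify $\gr H$ with $\Se(\Q(\gr H))$ as a braided graded algebra. The extra bookkeeping you include (that $J=H$ when $K=k$, and that $\gr H$ is still irreducible and symmetrically braided) is all accurate and merely makes explicit what the paper leaves implicit.
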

\begin{proof}
From Corollary \ref{cor} the irreducible symmetrically braided graded bialgebra $\gr H$ is $\c$-commutative and by Proposition \ref{masprop} there exists a canonical braided graded isomorphism between $\gr H$ and $\Se(\Q(\gr H))$. Note that in our situation, the augmentation ideal of $\gr H$ is $\gr_+ H:=\oplus_{n\geq 1} H_n/H_{n-1}$.
\end{proof}
\begin{Rem}
The fact that an irreducible symmetrically braided bialgebra $H$ is of PBW type as a $k$-module does not generally say anything about $H$ having a linear PBW type basis. Of course, if $\gr H$ has a linear PBW type basis generated by homogeneous elements then $H$ has a linear PBW type basis generated by any lifting of the generating set for the basis of $\gr H$. However, as pointed out by several authors (see e.g. \cite[\S 7]{khar}), symmetric braided algebras (even for symmetric braidings) do not generally admit linear PBW type bases (even in a weakened sense). There are though important classes of examples (see below) of symmetric braided algebras that have certain types of PBW bases.
\end{Rem}
\begin{Exp}\label{exp1} Let $G$ be a finite abelian group and let $\chi:G\times G\to k^\times$ be a skew-symmetric bicharacter (i.e. $\chi(g,\cdot)$ and $\chi(\cdot,g)$ are group morphisms and $\chi(g,h)\chi(h,g)=1$ for all $g,h\in G$). On any $G$-graded vector space $V=\oplus_{h\in G}V_h$ we can put a $G$-module structure by letting $g$ act on $V_h$ by multiplication with the scalar $\chi(g,h)$. With this $kG$-module structure and the $kG$-comodule structure induced by the grading $V$ becomes an object in the category ${}_{kG}^{kG}\mathcal{YD}$ of Yetter-Drinfel'd modules over the group algebra $kG$. The category ${}_{kG}^{kG}\mathcal{YD}$ is a braided category and one can verify that on $V\otimes V$ the braiding 
\begin{equation}
\c_{V}:V\otimes V\to V\otimes V
\end{equation}
acts as follows: if $x\in V_g$, $y\in V_h$, then $\c_{V}(x\otimes y)=\chi(g,h)y\otimes x$. The skew-symmetry condition assures that $\c_{V,V}$ is symmetric. The braided symmetric algebra $\Se(V)$ has then a PBW basis in the sense of Scheunert \cite{SchGen}: given a set $\{x_i\}_{i\in I}$ of homogeneous elements of $V$ indexed by a well-ordered set $(I,\leq)$, denote by $g_i$ the degree of $x_i$; the set of elements of the form $x_{i_1}\cdots x_{i_n}$, where $n\geq 0$, ${i_s}\leq {i_{s+1}}$ for all $1\leq s<n$, and ${i_s}<{i_{s+1}}$ if $\chi(g_{i_s},g_{i_s})\neq 1$, is a basis of $\Se(V)$. In consequence, any bialgebra lifting $H$ of $\Se(V)$ (such as the enveloping algebra of a color Lie algebra) has also a PBW basis in the sense of Scheunert. In particular, any irreducible color Hopf algebra has a PBW basis in the sense of Scheunert.
\end{Exp}
\begin{Exp}  The category of super vector spaces has symmetric braiding. Therefore, according to Theorem \ref{thm1} any irreducible super-bialgebra is of PBW type, and moreover, according to Example \ref{exp1}, it also has a PBW basis.
\end{Exp}
\section{Applications}\label{appl} We discuss  a short application of the main result to braided Hopf algebras. All the facts recalled below are well known, especially for trivially braided Hopf algebras; we refer to \cite[\S 5]{masuoka} for the arguments in the general braided case.
\subsection{} Throughout this section $H$ is assumed to be a braided Hopf algebra with  coradical $H_0$; also, $K$ is assumed to be a categorical braided Hopf subalgebra of $H$ that contains the coradical $H_0$. We will use Sweedler's  notation for the coalgebra  structure on $H$. As usual, $\wedge^{n} K$, $n\geq 0$ is defined inductively by $\wedge^0 K:=0$, $\wedge^1 K:=K$, and $\wedge^{n}K:=\Delta_H^{-1}(K\otimes H+H\otimes \wedge^{n-1}K)$ for $n>1$. The filtration $F_nH:=\wedge^{n+1}K$ is a braided Hopf algebra filtration of $H$. We denote by \begin{equation}\gr_K(H):=\oplus_{n\geq 0}(\gr_K(H))(n)=\oplus_{n\geq 0}F_nH/F_{n-1}H\end{equation} the associated graded braided Hopf algebra. We denote by $\nabla,1,\Delta,\e,\c,S$ its relevant operations and structures.

Both the injection $i:K\hookrightarrow \gr_K(H)$ and the projection $\pi: \gr_K (H)\twoheadrightarrow K$ are braided Hopf algebra morphisms.  Let $R$ be the braided graded subalgebra of $\pi$-coinvariants
\begin{equation}\label{eq: Rdef}
R:=\{a_1S(\pi(a_2))~|~a\in \gr_K(H)\}=\{b\in \gr_K(H)~|~\sum b_1\otimes \pi(b_2)=b\otimes 1\}
\end{equation}
With the notation in the Introduction, $R=\gr J$. The linear map $\Pi:\gr_K(H)\to R$, $\Pi(a):=a_1S(\pi(a_2))$ is surjective and its kernel is the left ideal of $\gr_K(H)$ generated by $\ker\e_{|K}$. The braided algebra $R$ acquires several other structures. First, it can be observed that $\Delta(R)\subseteq \gr_K(H)\otimes R$ and hence $R$ is a braided quotient coalgebra of $\gr_K(H)$ with comultiplication given by $\Delta_R=(\Pi\otimes \id_R)\Delta$. It is also a left $K$-module for the action 
\begin{equation}
\ad_K:=\nabla(\nabla\otimes S)(\id_K\otimes\c)(\Delta\otimes\id_R):K\otimes R\to R
\end{equation} 
and a left $K$-comodule for the coaction 
\begin{equation}
\delta_R:=(\pi\otimes\id_R)\Delta:R\to K\otimes R
\end{equation} 
Furthermore, $R$ with the algebra and coalgebra structures mentioned above is an irreducible braided graded Hopf algebra in an appropriately  defined category ${}_{K}^{K}\mathcal{YD}$ of braided Yetter-Drinfel'd modules over $K$. We will only mention here the braiding that makes $R$ a braided Hopf algebra
\begin{equation}\label{braidformula}
\c_R:=(\ad_K\otimes\id_R)(\id_K\otimes\c)(\delta_R\otimes\id_R): R\otimes R\to R\otimes R
\end{equation}
Not only $R$ is irreducible, but also the filtration associated to the grading induced from $\gr_K(H)$ is the coradical filtration of $R$ \cite[Proposition 5.1]{masuoka}; in particular, $R$ is coradically graded and thus $R=\gr R$ and the augmentation ideal of $R$ is $R_+$, its maximal graded ideal. Any set of homogeneous representatives in $R_+$ for a basis of $\Q(R)$ generates $R$ as an algebra. The vector space $\Q(R)=R_+/R_+^2$ is an object of ${}_{K}^{K}\mathcal{YD}$ too. The map
\begin{equation}\label{bosonization}
K\otimes R\to \gr_K(H),\quad k\otimes r\mapsto kr
\end{equation}
is a graded linear isomorphism.
\subsection{} Let $(A,\Delta_A,\e_A,\c_A)$ and $(B,\nabla_B,1_B,\c_B)$ be a braided coalgebra and a braided algebra, respectively.
\begin{Def} With the notation above, let $f:A\to B$ be a linear map. We say that $f$ is central if 
\begin{equation}
 \nabla_B(f\otimes\id_B)=\nabla_B\c_B(f\otimes\id_B)\quad \text{and}\quad \nabla_B(\id_B\otimes f)=\nabla_B\c_B(\id_B\otimes f)
 \end{equation}
 We say that $f$ is cocentral if 
 \begin{equation}
(f\otimes\id_A) \Delta_A=(f\otimes\id_A)\c_A\Delta_A\quad \text{and}\quad(\id_A\otimes f)\Delta_A=(\id_A\otimes f)\c_A\Delta_A
 \end{equation}
\end{Def}
\begin{Lm}\label{lemma: braidingeq} With the notation above, the following hold
\begin{enumerate}
\item If $i: K\hookrightarrow \gr_K(H)$ is central then $\c_R=\c$.
\item If $\pi: \gr_K(H)\twoheadrightarrow K$ is cocentral then $\c_R=\c$.
\end{enumerate}
\end{Lm}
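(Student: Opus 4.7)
The plan is to reduce each of the two statements to a triviality claim about one of the two $K$-structures that enter the formula \eqref{braidformula}: in case (1), that $\ad_K(k\otimes s)=\e(k)s$ for all $k\in K$ and $s\in R$; in case (2), that $\delta_R(r)=1\otimes r$ for all $r\in R$. Once either of these holds, $\c_R=\c$ follows by a short unwinding of \eqref{braidformula}: in case (1) one uses $\e_K\pi=\e$ together with the counit identity $\sum\e(r_{(1)})r_{(2)}=r$; in case (2) one uses $\ad_K(1\otimes s)=s$, itself a consequence of $\Delta(1)=1\otimes 1$, $\c(1\otimes a)=a\otimes 1$, and $S(1)=1$.

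For (1), I unfold $\ad_K(k\otimes r)=\sum k_{(1)}\,u\,S(v)$, where $\c(k_{(2)}\otimes r)=\sum u\otimes v$ with $v\in K$ by categoricity. Centrality of $i$ lets one replace $k_{(1)}u$ with $\sum p\,q$, where $\c(k_{(1)}\otimes u)=\sum p\otimes q$, giving $\ad_K(k\otimes r)=\sum p\,q\,S(v)$. The key observation is that the triple $\sum p\otimes q\otimes v$ equals $\c_1\c_2\bigl(\Delta(k)\otimes r\bigr)$, which by the braided coalgebra axiom $(\id\otimes\Delta)\c=\c_1\c_2(\Delta\otimes\id)$ is $(\id\otimes\Delta)\c(k\otimes r)=\sum x\otimes y_{(1)}\otimes y_{(2)}$ for $\c(k\otimes r)=\sum x\otimes y$. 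The antipode identity $\sum y_{(1)}S(y_{(2)})=\e(y)1$ together with the counit axiom $(\id\otimes\e)\c(k\otimes r)=\e(k)r$ then collapse $\ad_K(k\otimes r)$ to $\e(k)r$.

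For (2), let $r\in R$. Cocentrality of $\pi$ gives $\delta_R(r)=(\pi\otimes\id)\Delta(r)=(\pi\otimes\id)\c\,\Delta(r)$. The naturality of the braiding with respect to the morphism $\pi$ amounts to the identity
$$
(\pi\otimes\id)\c=\c(\id\otimes\pi)\colon \gr_K(H)^{\otimes 2}\to K\otimes \gr_K(H);
$$
combined with the coinvariance $(\id\otimes\pi)\Delta(r)=r\otimes 1$ defining $R$ and the braided algebra identity $\c(r\otimes 1)=1\otimes r$, it yields
$$
\delta_R(r)=\c(\id\otimes\pi)\Delta(r)=\c(r\otimes 1)=1\otimes r.
$$

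I expect the more delicate point to be the invocation of naturality of $\c$ with respect to $\pi$ in case (2); this is a functorial property of the ambient braided category that has to be used explicitly rather than derived from the braided bialgebra axioms alone. Case (1), by contrast, is entirely self-contained, using only the braided bialgebra axioms of $\gr_K(H)$ and the antipode identity.
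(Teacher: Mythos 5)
Your proposal is correct. For part (2) it coincides with the paper's argument: cocentrality of $\pi$ plus the identity $(\pi\otimes\id)\c=\c(\id\otimes i\pi)$ plus the coinvariance $\sum b_1\otimes\pi(b_2)=b\otimes 1$ force $\delta_R=1\otimes\id_R$, and then $\c_R$ collapses to $\c$ since $\ad_K(1\otimes s)=s$. One small point of difference: you justify $(\pi\otimes\id)\c=\c(\id\otimes\pi)$ by naturality of the braiding, and rightly flag this as the delicate step in a non-categorical setting; the paper instead derives it from the fact that $\gr_K(H)$ is a \emph{graded} braided Hopf algebra, so that $\c$ interchanges the degrees of the two tensor factors and $i\pi$ is precisely the projection onto the degree-zero component $K$ --- this sidesteps any appeal to functoriality of $\c$ beyond the objects at hand. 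For part (1) the paper gives no argument at all, merely asserting that the verification is "lengthy but straightforward" in diagrammatic notation; your computation supplies exactly the missing content, and it checks out: centrality of $i$ lets you rewrite $\sum k_{(1)}uS(v)$ as $\sum pqS(v)$ with $\sum p\otimes q\otimes v=\c_1\c_2(\Delta(k)\otimes r)=(\id\otimes\Delta)\c(k\otimes r)$ by the braided coalgebra axiom, whereupon the antipode and counit identities give $\ad_K(k\otimes r)=\e(k)r$, and triviality of $\ad_K$ reduces $\c_R$ to $\c$ via $\e_K\pi=\e$ and the counit axiom. This is a genuinely useful addition, since it isolates the two structural facts ($\ad_K$ trivial in case (1), $\delta_R$ trivial in case (2)) that each separately trivialize the Yetter--Drinfel'd braiding \eqref{braidformula}.
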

\begin{proof} The verification of the first statement is somewhat lengthy but nevertheless straightforward; it can be performed using the standard diagrammatic notation. For the second statement keep in mind that since $\gr_K(H)$ is a graded braided Hopf algebra we have 
\begin{equation}(\pi\otimes \id_{\gr_K(H)})\c=\c(\id_{\gr_K(H)}\otimes i\pi)
\end{equation}
and using the hypothesis and \eqref{eq: Rdef} we obtain that $\delta_R=\c(\id_R\otimes i\pi)\Delta=1\otimes \id_R$ from which our claim follows.
\end{proof}
\begin{Thm}\label{almostcentral}
With the notation above, assume that the braiding $\c$ is symmetric. If either $i: K\hookrightarrow \gr_K(H)$ is central, or $\pi: \gr_K(H)\twoheadrightarrow K$ is cocentral then $H$ is of PBW type as a $K$-module.
\end{Thm}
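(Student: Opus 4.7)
The plan is to reduce the theorem to Theorem \ref{thm1} by means of the structural isomorphism $\gr_K(H) \cong K \otimes R$ and the braiding identification in Lemma \ref{lemma: braidingeq}. First, I recall from the setup of Section \ref{appl} that the braided quotient coalgebra $J = H/HK^+$ has, after passing to the associated graded for the coradical filtration, $\gr J = R$, where $R$ is the subalgebra of $\pi$-coinvariants in $\gr_K(H)$. The results cited from \cite{masuoka} give that $R$ is an irreducible, coradically graded braided Hopf algebra when equipped with the internal braiding $\c_R$ defined in \eqref{braidformula}. In particular, $R = \gr R$ and the verification that $H$ is of PBW type as a $K$-module amounts to showing that $R$ is isomorphic, as a braided graded algebra, to the braided symmetric algebra on $\Q(R)$.

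Next, I invoke Lemma \ref{lemma: braidingeq}. Under either of the two hypotheses---that $i: K \hookrightarrow \gr_K(H)$ is central, or that $\pi: \gr_K(H) \twoheadrightarrow K$ is cocentral---the lemma asserts $\c_R = \c$. Since $\c$ is symmetric by assumption, this immediately promotes $R$ to an irreducible braided Hopf algebra whose braiding is symmetric in the sense of Section~1. In other words, the \emph{external} braiding inherited from $H$ and the \emph{internal} braiding making $R$ a braided Hopf algebra coincide, so that $R$ becomes a symmetrically braided Hopf algebra to which the earlier part of the paper applies directly.

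With this in hand, I apply Theorem \ref{thm1} to $R$: as an irreducible symmetrically braided bialgebra, $R$ is of PBW type as a $k$-module, i.e.\ $\gr R$ is isomorphic as a braided graded algebra to $\Se(\Q(\gr R))$. Because $R$ is already coradically graded, $\gr R = R$, and thus $R \cong \Se(\Q(R))$ as braided graded algebras. Translating back via $R = \gr J$, this is precisely the statement that $H$ is of PBW type as a $K$-module.

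The only real subtlety---and the step I would scrutinize most carefully---is the legitimacy of applying Theorem \ref{thm1} to $R$. A priori $R$ lives in the category ${}_K^K\mathcal{YD}$ and carries the categorical braiding $\c_R$, whereas Theorem \ref{thm1} is phrased for a plain braided bialgebra in the sense of Section~1. The bridge is exactly Lemma \ref{lemma: braidingeq}: once $\c_R = \c$, the structure maps of $R$ (inherited from $\gr_K(H)$) satisfy all the plain braided bialgebra axioms with respect to the symmetric braiding $\c$, and the coradical filtration coincides with the degree filtration, so the hypotheses of Theorem \ref{thm1} are unambiguously satisfied. Everything else is a routine unwinding of definitions.
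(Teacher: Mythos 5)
Your proof is correct and follows exactly the paper's route: Lemma \ref{lemma: braidingeq} identifies $\c_R$ with the symmetric braiding $\c$, and Theorem \ref{thm1} applied to the irreducible coradically graded $R=\gr J$ yields $R\cong\Se(\Q(R))$, which is the definition of PBW type as a $K$-module. The paper's own proof is a one-line version of the same argument; your additional discussion of why Theorem \ref{thm1} applies to $R$ is a reasonable elaboration rather than a deviation.
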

\begin{proof}
From Lemma \ref{lemma: braidingeq} and Theorem \ref{thm1} we obtain that $R$ is isomorphic to the braided symmetric algebra $\Se(\Q(R))$.
\end{proof}

Under the same hypotheses, for usual (i.e. trivially braided Hopf algebras) $Q(R)$ is also trivially braided and we obtain that $H$ has a PBW basis as a $K$-module. In this situation we obtain the following extension of the main result in \cite{ion,ion2}.

\begin{Cor}\label{almostcentral2}
Under the hypotheses of Theorem \ref{almostcentral}, assume in addition that $H$ is trivially braided. Then, $H$ has a PBW basis as a $K$-module.
\end{Cor}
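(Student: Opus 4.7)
The plan is to reduce Corollary~\ref{almostcentral2} to Theorem~\ref{almostcentral} and the elementary observation that the braided symmetric algebra of a trivially braided vector space is the classical symmetric algebra, which admits a monomial basis in any well-ordered basis of generators.

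First, since $H$ is trivially braided, so is the induced braiding on $\gr_K(H)$ and hence on all of its categorical subquotients, including the subalgebra $K$, the coinvariant subalgebra $R=\gr J$, and the quotient $\Q(R)=R_+/R_+^2$. Under either centrality hypothesis, Lemma~\ref{lemma: braidingeq} identifies the Yetter-Drinfel'd braiding $\c_R$ on $R$ with the ambient braiding $\c$. Consequently $\c_R$ is trivial, and so is the braiding it induces on $\Q(R)$.

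Next, Theorem~\ref{almostcentral} supplies a braided graded isomorphism $R\cong\Se(\Q(R))$. Since the braiding on $\Q(R)$ is trivial, the defining relations of the braided symmetric algebra coincide with the usual commutation relations, so $\Se(\Q(R))$ is the classical symmetric algebra on $\Q(R)$. Any well-ordered linear basis $\{x_i\}_{i\in I}$ of $\Q(R)$ therefore produces a linear basis of $\Se(\Q(R))$ consisting of non-decreasing monomials $x_{i_1}\cdots x_{i_n}$ with $i_1\leq\cdots\leq i_n$. Transporting back to $H$ via the graded linear isomorphism~\eqref{bosonization} $K\otimes R\cong\gr_K(H)$ and Masuoka's filtered left $K$-linear isomorphism $H\cong K\otimes J$ recalled in Section~\ref{appl} yields a free $K$-basis of $H$ of the required PBW form. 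There is no substantive obstacle; the only input beyond Theorem~\ref{almestcentral} is the identification $\c_R=\c$ provided by Lemma~\ref{lemma: braidingeq}, after which the argument is purely formal.
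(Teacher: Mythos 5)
Your proof is correct and follows essentially the same route as the paper, which dispatches this corollary in the single remark preceding its statement: under the centrality/cocentrality hypothesis Lemma \ref{lemma: braidingeq} forces $\c_R=\c$, which is trivial, so $\Se(\Q(R))$ is the ordinary symmetric algebra and therefore has a basis of non-decreasing monomials. You correctly isolate the one non-formal point --- that what must be trivial is the Yetter--Drinfel'd braiding $\c_R$ (not merely the ambient $\c$), and that this is exactly what Lemma \ref{lemma: braidingeq} supplies.
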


\end{document}